\documentclass[12pt,oneside,english]{amsart}
\usepackage[T1]{fontenc}
\usepackage[latin9]{inputenc}
\usepackage{units}
\usepackage{amsthm}
\usepackage{amssymb}
\usepackage{setspace}
\usepackage{esint}
\setstretch{1.6000000000000001}
\onehalfspacing

\makeatletter
\numberwithin{equation}{section}
\numberwithin{figure}{section}
 \theoremstyle{definition}
 \newtheorem*{defn*}{\protect\definitionname}
\theoremstyle{plain}
\newtheorem{thm}{\protect\theoremname}
  \theoremstyle{plain}
  \newtheorem{lem}[thm]{\protect\lemmaname}
  \theoremstyle{plain}
  \newtheorem{cor}[thm]{\protect\corollaryname}
  \newtheorem*{thm*}{Theorem}

\makeatother

\usepackage{babel}
  \providecommand{\corollaryname}{Corollary}
  \providecommand{\definitionname}{Definition}
  \providecommand{\lemmaname}{Lemma}
\providecommand{\theoremname}{Theorem}

\begin{document}
\begin{onehalfspace}
\global\long\def\ltnorm#1#2{\left\Vert #1\right\Vert _{L^{2}#2}}
\global\long\def\norm#1{\left\Vert #1\right\Vert }
\global\long\def\bbR{\mathbb{R}}
\global\long\def\bbC{\mathbb{C}}
\global\long\def\bbT{\mathbb{T}}
\global\long\def\bbZ{\mathbb{Z}}
\global\long\def\bbN{\mathbb{N}}
\global\long\def\eps{\varepsilon}
\global\long\def\rint#1#2{\underset{#1}{\overset{#2}{\int}}}
\global\long\def\SS{\mathcal{S}}

\title{Riesz sequences and generalized arithmetic progressions}

\author{Itay Londner}

\address{School of Mathematical Sciences, Tel-Aviv University, Tel-Aviv 69978,
Israel.}

\email{itaylond@post.tau.ac.il }

\begin{abstract}

The purpose of this note is to verify that the results attained in \cite{londner2014riesz} admit an extension to the multidimensional
setting. Namely, for subsets of the two dimensional torus we find
the sharp growth rate of the step(s) of a generalized arithmetic progression
in terms of its size which may be found in an exponential systems
satisfying the Riesz sequence property.

\end{abstract}

\maketitle

\section{Introduction}

We discuss a particular aspect of the classical interpolation problem
for functions supported on the two dimensional torus $\bbT^{2}=\left[-\pi,\pi\right)^{2}$.

\subsection{Interpolation sets and Riesz sequences}

Let $\mathcal{S}\subset\bbR^{2}$ be a bounded measurable set and
$\Lambda\subset\bbR^{2}$. We are interested in the Riesz sequence
property of the system of exponentials
\[
E\left(\Lambda\right):=\left\{ e^{i\left\langle \lambda,t\right\rangle }\right\} _{\lambda\in\Lambda}
\]
in the space $L^{2}\left(\mathcal{S}\right)$.

A system of vectors $\left\{ \varphi_{n}\right\} $ in a Hilbert space
$H$ is a \emph{Riesz sequence} (RS) if there exist constants $0<A\leq B$
such that 
\[
A\sum_{n}\left|c_{n}\right|^{2}\leq\norm{\sum_{n}c_{n}\varphi_{n}}^{2}\leq B\sum_{n}\left|c_{n}\right|^{2}\qquad\left(1\right)
\]
for every finite sequence of scalars $\left\{ c_{n}\right\} $.

The Riesz sequence property of the exponential system $E\left(\Lambda\right)$
in the space $L^{2}\left(\SS\right)$ (we abbreviate $\Lambda\in RS\left(\SS\right))$
can be reformulated in terms of the Paley-Wiener space $PW_{\SS}$
of all functions $f\in L^{2}\left(\bbR^{2}\right)$ whose Fourier
transform 
\[
\hat{f}\left(\xi\right)=\int f\left(t\right)e^{-i\left\langle \xi,t\right\rangle }dt
\]
is supported on $\SS$. 

A set $\Lambda$ is called an \emph{interpolation set} for $PW_{\SS}$
if for any possible ''data'' $\left\{ c_{\lambda}\right\} \in\ell^{2}\left(\Lambda\right)$
there exists at least one solution, i.e. a function $f\in PW_{\SS}$
satisfying $f\left(\lambda\right)=c_{\lambda}$ for all $\lambda\in\Lambda$.
It is well known that $\Lambda\in RS\left(\SS\right)$ if and only
if $\Lambda$ is an interpolation set for $PW_{\SS}$ (for details see \cite{young1980introduction}, \cite{olevskii2016functions}).

Usually the RHS inequality in $\left(1\right)$ is easier to prove
and mild assumptions on $\Lambda$ such as uniform discreteness suffices
(see \cite{young1980introduction}, \cite{olevskii2016functions}), while the LHS inequality is much more difficult.
In this paper we will restrict ourselves to the case where $\SS$
is a subset of the two dimensional torus $\bbT^{2}$ and $\Lambda$ is a subset of the integer
lattice $\bbZ^{2}$, a case in which the RHS inequality in $\left(1\right)$
always holds with $B=1$. A constant satisfying the LHS inequality of $\left(1\right)$
will be referred to as a \emph{lower Riesz bound} of the system $\left\{ \varphi_{n}\right\} $
in $H$.

\subsection{Generalized arithmetic progressions}

An arithmetic progression is defined by two parameters; the size of
a progression and its step. A central role in this paper is played
by the concept of generalized arithmetic progression. The definition
below is a common generalization to the concept of an arithmetic progression
(see \cite{MR1477155}).

\begin{defn*}
For a pair $w_{1},w_{2}\in\bbZ^{2}\backslash\left\{ 0\right\} $ of
linearly independent vectors and a pair of integers $d_{1},d_{2}>1$,
the \emph{rank $2$ generalized arithmetic progression} (GAP) of size
${\displaystyle d_{1}\cdot d_{2}}$ with steps $w_{1},w_{2}$ is the
set 
\[
GAP\left(w_{1},w_{2};d_{1},d_{2}\right)=\left\{ k_{1}w_{1}+k_{2}w_{2}\,|\,0\leq k_{j}\leq d_{j}-1\right\} .
\]
A \emph{rank 1 GAP} is the degenerated case when either $d_{1}$ or
$d_{2}$ equal $1$, then (when $d_{2}=1$) we denote $AP\left(w_{1};d_{1}\right)=GAP\left(w_{1},w_{2};d_{1},1\right)$.
\end{defn*}
\begin{defn*}
Given $w\in\bbZ^{2}\backslash\left\{ 0\right\} $ we say $v=\left(v_{1},v_{2}\right)\in\bbZ^{2}$
is the \emph{generator} of $w$ if $w\in v\bbN$ and $\gcd\left(v_{1},v_{2}\right)=1$.
We shall denote the set of all generators by $V$.
\end{defn*}
Note that every $w\in\bbZ^{2}\backslash\left\{ 0\right\} $ can be
uniquely represented $w=\ell v$, with $\ell\in\bbN$ and $v\in V$.

\subsection{Arithemtic progressions and the one dimensional torus}

	It is known, and can be proved directly, that every set $\SS\subset\bbT$ of positive admits a Riesz sequence which contains, for arbitrarily large $N$, an arithmetic progression of length $N$ and step $\ell$. But what can be said about the connection between $N$ and $\ell$?\\
	Given a set $\SS$ of positive measure it has been proved by Lawton (\cite{lawton2010minimal}, Cor. 2.1) that the following statments are equivalent:\\
	\emph{\begin{itemize}
			\item $E\left(\bbZ\right)$ can be decomposed into finitely many Riesz sequences in $L^2\left(\SS\right)$
			\item There exists a set $\Lambda\in RS\left(\SS\right)$ with bounded gaps
	\end{itemize}}

	The former was known as the Feichtinger conjecture for exponentials. The Feichtinger conjecture in its general form was proved equivalent to the Kadison-Singer problem (\cite{MR2277219}), which has been solved recently (see \cite{marcus2015interlacing}).\\
	It now follows that every set $\SS$ admits an exponential Riesz sequence $E\left(\Lambda\right)$ such that $\Lambda$ has positive lower density, which, in turn, determines that it must contain arbitrarily long AP. Note that by Gowers' quantitative version of Szemeredi's theorem (\cite{gowers2001new}), one can ensure a super exponential growth rate of the step. \\	
	In \cite{londner2014riesz}, the problem of determining an optimal growth rate of the step $\ell$ as a function of $N$ has been considered. Extending a result of Bownik and Speegle (\cite{MR2270922}, Thm 4.16), it was proved that a sublinear growth is not enough. \\
\begin{thm*}[\cite{londner2014riesz}, Thm. 1]
	There exists a set $\SS\subset\bbT$ of positive measure such that whenever a set $\Lambda\subset\bbZ$ contains, for arbitrarily large $N$, an arithmetic progression of length $N$ and step $\ell=O\left(N^{\alpha}\right)$, $\alpha<1$, the exponential system $E\left(\Lambda\right)$ is not a Riesz sequence in $L^{2} \left(\SS\right)$
\end{thm*}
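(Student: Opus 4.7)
The plan is to construct $\SS$ of positive measure as the complement in $\bbT$ of a carefully arranged union of small neighborhoods of rationals, and then to show that any arithmetic progression in $\Lambda$ of length $N$ and step $\ell\le CN^\alpha$ supplies a normalized test sequence whose Riesz quotient tends to $0$. The starting point will be the standard reduction: if $\Lambda\supset\{\lambda_0+k\ell:0\le k<N\}$, substituting $c_{\lambda_0+k\ell}=N^{-1/2}$ (and $c_\lambda=0$ otherwise) into the left-hand inequality of $(1)$ gives
\[
A\;\le\;\frac{1}{N}\int_\SS K_{N,\ell}(t)\,dt,\qquad K_{N,\ell}(t)\;:=\;\frac{\sin^2(N\ell t/2)}{\sin^2(\ell t/2)},
\]
since $\bigl|\sum_{k=0}^{N-1}e^{i(\lambda_0+k\ell)t}\bigr|^2=K_{N,\ell}(t)$. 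It will therefore suffice to produce $\SS$ of positive measure on which the right-hand side is $o(1)$ along every sequence $(N,\ell_N)$ with $N\to\infty$ and $\ell_N\le CN^\alpha$.

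Next I would exploit the concentration of the Fej\'er-type kernel $K_{N,\ell}$: it is $\tfrac{2\pi}{\ell}$-periodic of total $\bbT$-mass $2\pi N$, its peaks have width $\asymp 1/(N\ell)$ around the rationals $\tfrac{2\pi m}{\ell}$ ($0\le m<\ell$), and off those peaks one has $K_{N,\ell}(t)\lesssim\bigl(\ell\operatorname{dist}(t,\tfrac{2\pi}{\ell}\bbZ)\bigr)^{-2}$. Consequently, if $\SS$ avoids the $r$-neighborhood of $\tfrac{2\pi}{\ell}\bbZ$ then the integral above is $\lesssim 1/(\ell r)$, and the bound on $A$ becomes $\lesssim 1/(N\ell r)$. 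With this computation in hand I would set
\[
\SS\;:=\;\bbT\setminus\bigcup_{\ell\ge 2}\bigcup_{m=0}^{\ell-1}\Bigl(\tfrac{2\pi m}{\ell}-r_\ell,\;\tfrac{2\pi m}{\ell}+r_\ell\Bigr),\qquad r_\ell\;:=\;c\,\ell^{-1-1/\alpha}\log^2(\ell+1),
\]
for a sufficiently small $c>0$. The removed measure is $\le 2c\sum_{\ell\ge 2}\ell^{-1/\alpha}\log^2(\ell+1)$, which converges \emph{precisely} because $1/\alpha>1$, so for small enough $c$ one has $|\SS|>0$.

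To conclude, assume $\Lambda\in RS(\SS)$ with lower bound $A>0$ and contains, for arbitrarily large $N$, APs of length $N$ with step $\ell_N\le CN^\alpha$. Applying the previous estimate with $r=r_{\ell_N}$ and using $N\ge(\ell_N/C)^{1/\alpha}$ yields
\[
A\;\lesssim\;\frac{1}{N\ell_N r_{\ell_N}}\;\le\;\frac{C^{1/\alpha}}{\ell_N^{1+1/\alpha}\,r_{\ell_N}}\;=\;\frac{C^{1/\alpha}/c}{\log^2(\ell_N+1)}\;\xrightarrow[N\to\infty]{}\;0
\]
when $\ell_N\to\infty$; the case of bounded $\ell_N$ is simpler, since then a fixed $r_{\ell_N}>0$ gives $A\lesssim 1/N$ directly. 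Either way one reaches a contradiction with $A>0$.

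The principal obstacle is the balancing act built into the construction: the peak-suppression condition $r_\ell\gg 1/(N\ell)$, combined with the step constraint $\ell\le CN^\alpha$, forces $r_\ell\gg\ell^{-1-1/\alpha}$, while positive measure of $\SS$ demands $\sum_\ell\ell\,r_\ell<\infty$. These two requirements coexist precisely when $\sum_\ell\ell^{-1/\alpha}$ converges, i.e., when $\alpha<1$; this is both why the argument succeeds and why the exponent $\alpha=1$ is the genuine threshold of the theorem. A secondary care point is verifying that only rationals with denominator exactly $\ell$ need be excised to defeat step-$\ell$ APs, so no lcm-style blow-up in $|E_\ell|$ occurs.
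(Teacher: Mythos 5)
Your overall strategy is exactly the paper's: test the lower Riesz inequality against the normalized Dirichlet kernel of the progression, observe that this kernel concentrates near the points $\tfrac{2\pi m}{\ell}$, and delete from $\bbT$ a summable family of neighborhoods of those points whose radii nevertheless decay slowly enough that $N\ell r_\ell\to\infty$ under the constraint $\ell\le CN^{\alpha}$. The kernel estimates, the tail bound $\lesssim 1/(\ell r)$, and the contradiction are all correct, and the dichotomy between $\ell_N\to\infty$ and $\ell_N$ bounded is handled properly.

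There is, however, one genuine defect: your set $\SS$ depends on $\alpha$. The theorem asserts the existence of a \emph{single} set of positive measure that defeats every $\Lambda$ containing long progressions of step $O(N^{\alpha})$ for \emph{every} $\alpha<1$; the quantifier on $\alpha$ comes after the choice of $\SS$. With $r_\ell=c\,\ell^{-1-1/\alpha_0}\log^{2}(\ell+1)$ your argument kills steps $\ell\le CN^{\alpha}$ only for $\alpha\le\alpha_0$: for $\alpha_0<\alpha<1$ one only gets $N\ell r_\ell\gtrsim \ell^{1/\alpha-1/\alpha_0}\log^{2}(\ell+1)$, which tends to $0$, so the contradiction evaporates. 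The fix is exactly the device used in the paper's construction (condition (2) on the sequence $\delta(n)$ there): choose the radii independently of $\alpha$ but just barely summable, e.g. $\ell r_\ell=\delta(\ell)$ with $\delta(\ell)=c\,\ell^{-1}\log^{-2}(\ell+1)$, so that $\sum_\ell \ell r_\ell<\infty$ while $\ell^{1/\alpha}\,\ell r_\ell = c\,\ell^{1/\alpha-1}\log^{-2}(\ell+1)\to\infty$ simultaneously for all $\alpha<1$. Your closing remark that convergence holds ``precisely because $1/\alpha>1$'' signals the confusion: the threshold $\alpha=1$ should be encoded once, in a single choice of radii, not separately for each $\alpha$. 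With that one-line change the proof is complete and coincides with the paper's.
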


	It was also proved that this rate is sh	arp, i.e., a linear growth already suffices.\\
	\begin{thm*}[\cite{londner2014riesz}, Thm. 2]
		Every set $\SS\subset\bbT$ of positive  measure admit an exponential Riesz sequence $E\left(\Lambda\right)$ in $L^{2} \left(\SS\right)$, such that the set $\Lambda$ contains arbitrarily long arithmetic progressions of length $N$ and step $\ell=O\left(N\right)$
	\end{thm*}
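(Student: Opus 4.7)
The plan is to construct $\Lambda$ as a disjoint union $\bigsqcup_{k\ge 1}AP_k$ of arithmetic progressions $AP_k=\{a_k+j\ell_k:0\le j<N_k\}$ with $N_k\to\infty$ and $\ell_k\le CN_k$, ensuring (i) each $E(AP_k)$ is a Riesz sequence in $L^2(\SS)$ with a uniform lower bound $A>0$, and (ii) the $AP_k$ are placed far apart in $\bbZ$ so that the off-diagonal contributions to the Gram matrix of $E(\Lambda)$ are negligible.

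For (i), the key object is the Toeplitz symbol
\[
h_\ell(s)=\frac{2\pi}{\ell}\#\bigl\{m\in[0,\ell):\,(s+2\pi m)/\ell\in\SS\bigr\},
\]
since the Gram matrix of $E(\{\ell n:0\le n<N\})$ in $L^2(\SS)$ is the $N\times N$ Toeplitz matrix with entries $\hat\chi_\SS(\ell(j-k))$, and its minimum eigenvalue is bounded below by $\inf h_\ell$. The task thus reduces to producing $\ell_k\le CN_k$ with $\inf h_{\ell_k}\ge A>0$ uniformly. To do so I would use Lebesgue's density theorem: for any $\eta>0$ there is an arc $I\subset\bbT$ with $|\SS\cap I|/|I|>1-\eta$. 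Setting $\ell=2\pi/|I|$ and using the affine rescaling $s=\ell(t-c)$ with $c$ the centre of $I$, $I$ is sent bijectively onto $\bbT$ and $\SS\cap I$ to a set $J\subset\bbT$ with $|\bbT\setminus J|\le 2\pi\eta$. A Tur\'an-type estimate --- the top eigenvalue of the compression of $\chi_{\bbT\setminus J}$ onto $\operatorname{span}\{e^{ins}\}_{0\le n<N}$ is bounded by its trace $N\eta$ --- provides a positive lower Riesz bound for $\{e^{ins}\}_{0\le n<N}$ in $L^2(J)$ whenever $\eta\le 1/(2N)$, and this, through the change of variables, yields a lower bound on $h_\ell$. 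Extracting along a subsequence $N_k\to\infty$ arcs $I_k$ of length $|I_k|\ge c/N_k$ with density deficit $\le 1/(2N_k)$ then gives $\ell_k\le CN_k$ with $\inf h_{\ell_k}\ge A>0$.

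For (ii), the starting points $a_k$ are chosen highly lacunary in $\bbZ$ so that the off-diagonal Gram matrix entries $\hat\chi_\SS(\lambda-\lambda')$ for $\lambda\in AP_k$, $\lambda'\in AP_{k'}$, $k\ne k'$, which are Fourier coefficients of $\chi_\SS$ at arbitrarily large frequencies, become summably small by Riemann--Lebesgue. A Schur-type perturbation argument then shows that the block-diagonal Toeplitz matrix (with diagonal blocks $\ge A$) plus this small off-diagonal perturbation remains bounded below, so $E(\Lambda)$ is a Riesz sequence in $L^2(\SS)$.

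The main obstacle is the quantitative extraction in (i): for an arbitrary $\SS$ of positive measure the rate at which Lebesgue density approaches $1$ at a density point may be slow, so matching arc length $\gtrsim 1/N$ with density deficit $\le 1/(2N)$ requires a careful choice of subsequence exploiting the modulus of density of $\SS$. A robust alternative is a multi-scale construction in which many arcs are used simultaneously to control $\inf h_\ell$ uniformly in $N$ along a suitable sequence of integers $\ell$.
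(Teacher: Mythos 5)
Your step (i) contains a genuine gap that breaks the whole construction. From a single arc $I$ with $|\SS\cap I|/|I|>1-\eta$ and $\ell=2\pi/|I|$, the rescaling argument only controls the exponentials on $\SS\cap I$, a set of measure about $2\pi/\ell$; equivalently, for a worst-case $s$ the fibre $\left\{ (s+2\pi m)/\ell\right\} _{0\le m<\ell}$ is guaranteed to meet $\SS$ only in the one point that falls in $I$, so what you actually get is $\inf h_{\ell}\ge c/\ell\sim c/N$, not $\inf h_{\ell}\ge A>0$. Concretely, the lower Riesz bound you obtain for the block $AP_{k}$ decays like $1/N_{k}$, and then $E(\Lambda)$ cannot have a positive lower bound at all (test the unit coefficient vector supported on $AP_{k}$ and let $k\to\infty$). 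Moreover, demanding $\inf h_{\ell}\ge A$ uniformly is much stronger than the theorem requires: it asks that every arithmetic progression of $\ell$ points with step $2\pi/\ell$ in $\bbT$ meet $\SS$ in a fixed positive proportion of its points, and nothing in the Lebesgue density theorem produces such $\ell$ for a general $\SS$ of positive measure. The ``multi-scale alternative'' you mention at the end is precisely the missing idea, and you give no argument for it.

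The actual proof (Lemmas 5--8 of the cited one-dimensional paper, whose structure is reproduced in Section 3 here in the two-dimensional setting) avoids the Toeplitz symbol altogether and works with the finite $N\times N$ section, measuring the off-diagonal part of the Gram matrix in Hilbert--Schmidt rather than operator norm. One sets $a(m)=|\hat{\chi}_{\SS}(m)|^{2}/\norm{\chi_{\SS}}^{2}$, so that $\sum_{m}a(m)\le1$, and an averaging/pigeonhole argument over prime steps --- resting on the divergence of $\sum_{p}1/p$ together with the (near-)disjointness of the corresponding progressions --- yields infinitely many admissible steps for which $\sum_{\lambda\neq\mu}a(\lambda-\mu)<\eps$ over the progression. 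This gives every block the \emph{uniform} lower Riesz bound $|\SS|-\sqrt{\eps|\SS|}$, which is exactly what your approach fails to deliver. Your step (ii), the lacunary placement of the blocks plus a perturbation argument, is essentially the paper's Lemma 8 and is sound in spirit, though ``summably small by Riemann--Lebesgue'' must be organized by adjoining one block at a time with a geometrically decreasing error budget rather than invoked for all cross-terms at once.
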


\subsection{Main problem and results}

In this paper we are interested in extending the results from \cite{londner2014riesz} to the two dimensional setting. Namely, for
sets $\SS\subset\bbT^{2}$ we consider subsets $\Lambda$ of the integer lattice containing arbitrarily large GAPs while keeping the Riesz sequence property for $E\left(\Lambda\right)$ in $L^2\left(\SS\right)$. We prove

\begin{thm}
For every $\eps>0$, there exists a set $S\subset\mathbb{T}^{2}$ with $\left|\SS\right|>1-\eps$ such that for any set
$\Lambda\subset\bbZ^{2}$ which contains, for arbitrarily large $N$,
a (translated) GAP of size $N^{2}$ and steps of length $O\left(N^{\alpha}\right)$,
$\alpha<1$, the exponential system $E\left(\Lambda\right)$ is not
a RS in $L^{2}\left(S\right)$. Moreover, the result holds if for
some fixed generators $v_{1},v_{2}\in V$, for infinitely many $N$'s
the set $\Lambda$ contains a (translated) GAP of size $N$ and steps
$w_{1},w_{2}$ of length $\left|w_{j}\right|\leq C\left(\alpha,v_{j}\right)N^{\alpha}$,
$\alpha<1$ for $j=1,2$ generated by $v_{1}$ and $v_{2}$ respectively.

\end{thm}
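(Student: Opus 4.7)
The plan is to lift the one-dimensional Thm~1 of \cite{londner2014riesz} to two dimensions via measure-preserving group homomorphisms adapted to each generator pair, and then to amalgamate the resulting ``directional'' bad sets via a countable intersection.

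I would first produce a measure-close-to-$2\pi$ version of the 1D bad set: for every $\eta>0$ a set $E_\eta \subset \bbT$ with $|E_\eta|>2\pi-\eta$ enjoying the bad-AP property for all APs of length $N$ and step $O(N^\alpha)$, $\alpha<1$, simultaneously (a quantitative refinement of the construction behind Thm~1, obtained by controlling the measure lost at each stage). For each generator $v \in V$, the map $\pi_v:\bbT^2\to\bbT$, $\pi_v(t)=\langle v,t\rangle \pmod{2\pi}$, is a surjective measure-preserving group homomorphism (because $\gcd(v_1,v_2)=1$), and for a linearly independent pair $(v_1,v_2)\in V^2$ the map $\pi_{v_1,v_2}:\bbT^2\to\bbT^2$, $t\mapsto(\langle v_1,t\rangle,\langle v_2,t\rangle)$, is a measure-preserving group homomorphism of degree $|\det(v_1,v_2)|$. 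Enumerating $V$ and $V^2$, and picking $\eta_n\to 0$ with $\sum_n \eta_n$ sufficiently small, I would let $S$ be the intersection of all pullbacks $\pi_v^{-1}(E_{\eta_n})$ and $\pi_{v_1,v_2}^{-1}(E_{\eta_n}\times E_{\eta_n})$. Then $|S|>1-\eps$.

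The first (generator-free) statement would follow readily: a GAP of size $N^2$ with step lengths $O(N^\alpha)$ has sides $d_1,d_2$ with $d_1 d_2 = N^2$, so $\max(d_j) \ge N$. The sub-AP along the long side has length $\ge N$ and step $O(N^\alpha)=O(d_j^\alpha)$. Using $S\subset\pi_{v_j}^{-1}(E_\eta)$ for $v_j$ the generator of the long-side step, any 1D polynomial in the coordinate $\langle v_j, t\rangle$ has its $L^2(S)$ norm dominated by its $L^2(E_\eta)$ norm, to which Thm~1 of \cite{londner2014riesz} directly applies. For the moreover statement, I would test with tensor-product coefficients $c_{k_1,k_2}=a_{k_1}b_{k_2}$; under $\pi_{v_1,v_2}$ the polynomial factors as $A(s_1)B(s_2)$, and since $S\subset\pi_{v_1,v_2}^{-1}(E_\eta\times E_\eta)$ its $L^2(S)$ norm is bounded by $\int_{E_\eta}|A|^2\cdot\int_{E_\eta}|B|^2$. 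Along any subsequence of $N$'s for which one of the sides $d_j$ grows at least as a fixed positive power $N^c$ of $N$ with $c>\alpha$, the 1D theorem forces $\int_{E_\eta}|A|^2\to 0$ and yields failure of the Riesz property.

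The main obstacle is extending the moreover part to cover $\alpha$ up to $1$ in the balanced regime $d_1 \approx d_2 \approx \sqrt N$: the step-to-length ratio $\ell_j/d_j^\gamma=\Theta(N^{\alpha-\gamma/2})$ cannot be forced to $o(1)$ for any $\gamma<1$ when $\alpha\ge 1/2$, and because $\mathbf{1}_{E\times E}$ has a tensor Fourier transform, non-tensor test coefficients on a product set do not improve on tensor ones. Closing this gap genuinely requires a non-product two-dimensional bad set tuned to the full concentration pattern of GAP exponentials --- for example, a set obtained by deleting small neighborhoods of the dual lattice $\{t:\langle w_j,t\rangle\in 2\pi\bbZ,\ j=1,2\}$ on which the normalized 2D Dirichlet polynomial $F(t)\propto\prod_j((e^{id_j\langle w_j,t\rangle}-1)/(e^{i\langle w_j,t\rangle}-1))$ concentrates; the total removed measure should be summable over a sparse sequence of scales $N$ and over the generator pairs, so that $|S|>1-\eps$ is preserved.
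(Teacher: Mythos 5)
Your construction of $S$ is in essence the paper's own: the removed set $\bigcup_{w\neq0}I_{[w]}$, with $I_{[w]}=\left\{ t:\left\langle w,t\right\rangle \in\left(-\rho_{w},\rho_{w}\right)+2\pi\bbZ\right\} $, is exactly the union over generators $v\in V$ of $\pi_{v}^{-1}$ of a one-dimensional removed set, so the ``intersect pullbacks along $\pi_{v}$'' architecture is right. The gaps are in the quantitative execution, and they are genuine. In the generator-free part, the phrase ``Thm 1 of \cite{londner2014riesz} directly applies'' conceals that the one-dimensional bad set you invoke changes with $N$: it is the one attached to the generator $v_{m}$ of the long-side step, and to keep $\left|S\right|>1-\eps$ its measure defect $\eta_{m}$ must tend to $0$ summably, so the $1$D estimate it supplies degrades as $m$ grows. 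The paper closes this loop with three ingredients you omit: Lemma 3 ($\left|v_{m}\right|\geq cm^{\nicefrac{1}{2}}$, so a step of length $O\left(N^{\alpha}\right)$ forces $m\leq CN^{2\alpha}$), the explicit tuning $\rho_{\ell v_{m}}=\delta\left(\ell\right)\delta\left(m\right)$ with $\delta\left(n\right)n^{\nicefrac{1}{\alpha}}\rightarrow\infty$ for every $\alpha<1$, and --- crucially --- testing with all $N^{2}$ points of the GAP rather than only the long side. The last point is not cosmetic: your long-side reduction gives a bound $C/\left(N\delta\left(\ell\right)\delta\left(m\right)\right)$ with $N\geq c\left(\ell m^{\nicefrac{1}{2}}\right)^{\nicefrac{1}{\alpha}}$, i.e. an $m$-factor $m^{\nicefrac{1}{2\alpha}}\delta\left(m\right)$, which tends to infinity only for $\alpha<\nicefrac{1}{2}$ once the generator index is unbounded; using all $N^{2}$ points yields $m^{\nicefrac{1}{\alpha}}\delta\left(m\right)$ and the full range $\alpha<1$.

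For the moreover part you concede the argument does not close when $\alpha\geq\nicefrac{1}{2}$ in the balanced regime, so the proposal is incomplete by its own account; but the repair you gesture at (a non-product set built from neighborhoods of the zero-dimensional dual lattice) is more than is needed. The paper keeps removing, for each single $w$, the strip family $I_{[w]}$, and observes that for a rank $2$ GAP with steps $w_{1},w_{2}$ the complement of $I_{[w_{1}]}\cup I_{[w_{2}]}$ becomes, after the change of variables $L^{T}$ with $Le_{j}=w_{j}$, the product of the two one-dimensional complements. On that product the Dirichlet kernel with constant coefficients $1/\sqrt{d_{1}d_{2}}$ factors and gives $C/\left(d_{1}d_{2}\,\rho_{w_{1}}\rho_{w_{2}}\right)$: the \emph{total} size $d_{1}d_{2}$ appears, not the individual sides, and this is what defeats the balanced regime. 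Your observation that non-tensor coefficients cannot help on a product set is true but beside the point --- the gain comes from the $1/\left(d_{1}d_{2}\right)$ normalization, not from the shape of the coefficients --- and consequently the extra product pullbacks $\pi_{v_{1},v_{2}}^{-1}\left(E_{\eta}\times E_{\eta}\right)$ you build into $S$ are unnecessary.
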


We also prove Theorem 1 is sharp.
\begin{thm}
Given a set $\mathcal{S}\subset\bbT^{2}$ of positive measure,
there is a set of frequencies $\Lambda\subset\bbZ^{2}$ such that
\begin{enumerate}
\item [(i)]For arbitrarily large $N$, the set $\Lambda$ contains
a GAP of size $N^{2}$ with step size of the order $O\left(N\right)$.
\item [(ii)] The exponential system $E\left(\Lambda\right)$ forms
a Riesz sequence in $L^{2}\left(\mathcal{S}\right)$. 
\end{enumerate}
\end{thm}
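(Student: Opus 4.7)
The plan is to reduce the two-dimensional statement to two applications of Theorem~2 of \cite{londner2014riesz} via a tensor-product construction inside a product subset of $\SS$. The key observation is the monotonicity of lower Riesz bounds under set inclusion: if $\SS'\subset\SS$ and $E(\Lambda)\in RS(\SS')$ with lower bound $A$, then for any finite sum $f=\sum c_{\lambda}e^{i\left\langle \lambda,\cdot\right\rangle }$ one has $A\sum|c_{\lambda}|^{2}\leq\ltnorm{f}{(\SS')}^{2}\leq\ltnorm{f}{(\SS)}^{2}$, so $E(\Lambda)\in RS(\SS)$. It thus suffices to exhibit a positive-measure product set $E_{1}\times E_{2}\subset\SS$ and construct $\Lambda$ for it.

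To produce the product set, fix a density point of $\SS$ via the Lebesgue density theorem and pass to a small rectangle $R=I_{1}\times I_{2}$ about it on which $|\SS\cap R|>(1-\eta)|R|$, with $\eta$ as small as needed. A Fubini--Markov argument yields a positive-measure $E_{1}\subset I_{1}$ such that the slices $\SS_{x}=\{y\in I_{2}:(x,y)\in\SS\}$ satisfy $|\SS_{x}|\geq(1-\sqrt{\eta})|I_{2}|$ for $x\in E_{1}$; a dual step in the other variable produces a positive-measure $E_{2}\subset I_{2}$ with $E_{1}\times E_{2}\subset\SS$. Apply Theorem~2 of \cite{londner2014riesz} to each $E_{j}$ to obtain $\Lambda_{j}\subset\bbZ$ with $E(\Lambda_{j})\in RS(E_{j})$ (with lower bound $A_{j}>0$), containing, for arbitrarily large $N$, an AP of length $N$ and step $\ell_{j}=O(N)$, and set $\Lambda=\Lambda_{1}\times\Lambda_{2}$.

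For any finitely supported $(c_{\lambda_{1},\lambda_{2}})$, write $f(x,y)=\sum c_{\lambda_{1},\lambda_{2}}e^{i(\lambda_{1}x+\lambda_{2}y)}$ and $d_{\lambda_{2}}(x)=\sum_{\lambda_{1}}c_{\lambda_{1},\lambda_{2}}e^{i\lambda_{1}x}$. Applying the 1D lower bound in $y$ for each fixed $x$, then in $x$ for each fixed $\lambda_{2}$, yields
\[
\int_{E_{1}\times E_{2}}|f|^{2}\,dx\,dy\;\geq\;A_{2}\sum_{\lambda_{2}}\int_{E_{1}}|d_{\lambda_{2}}(x)|^{2}\,dx\;\geq\;A_{1}A_{2}\sum_{\lambda_{1},\lambda_{2}}|c_{\lambda_{1},\lambda_{2}}|^{2},
\]
so $E(\Lambda)\in RS(E_{1}\times E_{2})\subset RS(\SS)$, proving~(ii). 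For~(i), the Cartesian product of the APs in $\Lambda_{1}$ and $\Lambda_{2}$ produces, for any $N$, a rank-2 GAP in $\Lambda$ of size $N^{2}$ with axis-aligned steps of length $O(N)$.

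The main technical obstacle is extracting the product $E_{1}\times E_{2}\subset\SS$ in the first step: even when $|\SS\cap R|$ is close to $|R|$, the slices $\SS_{x}$ vary with $x$ and their intersection $\bigcap_{x\in E_{1}}\SS_{x}$ may be of measure zero -- for instance, when $\SS=\{(x,y):x+y\in C\}$ for a fat Cantor set $C\subset\bbT$, Steinhaus' theorem shows that $\SS$ contains no positive-measure product at all. In such pathological cases one first applies a unimodular change of coordinates on $\bbZ^{2}$ (with the dual change on $\bbT^{2}$) chosen to flatten $\SS$ into product-type form; such transformations preserve both the Riesz sequence property and the notion of GAP with the given size and step-length bounds, so the conclusion pulls back to the original coordinates.
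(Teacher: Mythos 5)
Your reduction to a product set is where the argument breaks, and the patch you offer does not close the gap. As you yourself observe, a positive-measure set $\SS\subset\bbT^{2}$ need not contain any product $E_{1}\times E_{2}$ of positive-measure sets (your example $\{(x,y):x+y\in C\}$ with $C$ a fat Cantor set is correct, by Steinhaus). The proposed remedy --- precompose with a unimodular change of coordinates that ``flattens $\SS$ into product-type form'' --- is asserted without any argument and is false in general: there are only countably many matrices in $GL_{2}(\bbZ)$, and one can intersect countably many sheared fat-Cantor conditions, one for each primitive integer direction, each of measure so close to $1$ that the intersection still has positive measure; the resulting set contains no positive-measure product in any integral coordinate system. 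So for a general $\SS$ the construction never gets off the ground. (The Fubini--Markov step as written also does not yield $E_{1}\times E_{2}\subset\SS$: knowing that each slice $\SS_{x}$, $x\in E_{1}$, has measure at least $(1-\sqrt{\eta})|I_{2}|$ gives no control on the uncountable intersection $\bigcap_{x\in E_{1}}\SS_{x}$, which is exactly what your ``dual step'' would need.) The tensor-product computation of the lower Riesz bound and the monotonicity under $\SS'\subset\SS$ are both fine, but they only apply once a product set is in hand.

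The paper's proof avoids products entirely and is not a reduction to the one-dimensional theorem. It works with the family of rank-$1$ progressions $B(n,k)=AP((n,k);n^{2})$, whose steps already have length $O(n)$; for $n=p$ prime and $1\leq k\leq p-1$ these progressions are pairwise disjoint (distinct slopes), and since $\sum_{p}1/p=\infty$ a pigeonhole argument applied to a symmetric summable weight $a(m,m')$ attached to $\SS$ shows that infinitely many of the $B(n,k)$ have small off-diagonal correlation sums, hence a uniform lower Riesz bound $\gamma$ in $L^{2}(\SS)$; suitably translated copies are then glued together (Lemma 8) while retaining the bound $\gamma/2$. Note also that the paper records, in Problem 2, that its construction only yields rank-$1$ GAPs and that producing full-rank GAPs explicitly remains open; your construction would have delivered full-rank GAPs by an essentially soft argument, which is a further indication that the product reduction cannot work for arbitrary $\SS$.
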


The basic approach to the proof is similar to that taken in \cite{londner2014riesz} with appropriate modifications araising from the two dimensional setting. In particular, using the terminology from section 1.2, in the one dimensional setting all arithmetic progressions have a single generator while the current result takes care of GAPs with many more possible generators simultaneously.\\

For $A,B\subset\bbR^{2}$, $x\in\bbR^{2}$ we let 
\[
A+B:=\left\{ \alpha+\beta\,|\,\alpha\in A\,,\,\beta\in B\right\} \;,\;x\cdot A:=\left\{ x\cdot\alpha\,|\,\alpha\in A\right\} \,.
\]

Throughout the text we use $\left|\SS\right|$ to denote the normalized
Lebesgue measure of the set $\SS$, also in various places we use
$c$ and $C$ as positive absolute constant which might be different
from one line to the next or even within the same line.

\section{Proof of theorem 1}

We use the following approach, for each GAP with eligible step(s)
we form a trigonometric polynomial having that spectrum. Then we subsequently
remove a subset of the torus on which most of its $L^{2}$ norm is
concentrated. The limiting set satisfies the desired properties.

\subsection{Construction of the set $\mathcal{S}$}

Recall the set $V$ defined in section $1.2$ and fix a partial ordering
of this set $V=\left\{ v_{m}\right\} _{m\in\bbN}$ so that
\[
\left|v_{m}\right|\leq\left|v_{m+1}\right|\quad\forall m.
\]
\begin{lem}

According to the fixed ordering of the set $V$, there exists an absolute
constant $c>0$ such that 
\[
cm^{\nicefrac{1}{2}}\leq\left|v_{m}\right|\;,\;\forall m
\]

\end{lem}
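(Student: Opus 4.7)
The plan is to prove the lemma by a straightforward lattice-point counting argument. Since $V \subset \bbZ^2 \setminus \{0\}$ (the set of primitive lattice points), the count of elements of $V$ in any disk is bounded by the count of integer lattice points in that disk, and the latter grows quadratically in the radius.

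First I would observe that for every $R \geq 1$,
\[
\#\left\{ v \in V : \left|v\right| \leq R \right\} \leq \#\left\{ z \in \bbZ^{2} : \left|z\right| \leq R \right\} \leq C R^{2},
\]
for some absolute constant $C > 0$. The standard way to justify this is to associate to each lattice point $z$ its unit square $z + [0,1)^2$; these squares are disjoint and all contained in the disk of radius $R+1$, so the number of lattice points is at most $\pi (R+1)^2 \le 4\pi R^2$ (using $R \ge 1$). One can of course invoke the classical asymptotic $\#\{v \in V : |v| \leq R\} \sim (6/\pi)R^{2}$, but the crude upper bound above is all that is required.

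Next, by the chosen ordering $\left|v_{1}\right| \leq \left|v_{2}\right| \leq \cdots \leq \left|v_{m}\right|$, all of the points $v_{1},\ldots,v_{m}$ lie in the closed disk of radius $\left|v_{m}\right|$ about the origin. Applying the previous bound with $R = \left|v_{m}\right|$ (which is $\geq 1$ since $V \subset \bbZ^{2} \setminus \{0\}$) yields
\[
m \;\leq\; \#\left\{ v \in V : \left|v\right| \leq \left|v_{m}\right| \right\} \;\leq\; C \left|v_{m}\right|^{2}.
\]
Rearranging gives $\left|v_{m}\right| \geq c m^{\nicefrac{1}{2}}$ with $c = 1/\sqrt{C}$, which is the desired inequality.

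There is no real obstacle here; the only thing to be careful about is the lattice-point counting step, where one must choose the inflation constant appropriately so that the inequality is valid for every $m \geq 1$ (including small $m$). The exponent $1/2$ is sharp, reflecting the fact that $V$ has positive density inside $\bbZ^{2}$.
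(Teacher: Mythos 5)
Your proof is correct and follows essentially the same lattice-point counting argument as the paper. In fact it is slightly more economical: the paper additionally cites the positive asymptotic density of coprime pairs (Apostol, Thm.\ 3.9), but as you observe, the trivial inclusion $V\subset\bbZ^{2}\setminus\{0\}$ together with the bound $\#\{z\in\bbZ^{2}:|z|\leq R\}\leq CR^{2}$ already suffices for the stated inequality (the density fact is only needed for the reverse bound $|v_{m}|\leq Cm^{\nicefrac{1}{2}}$, which the lemma does not claim).
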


\begin{proof}

By definition a lattice point $v$ belongs to the set $V$ if and
only if its coordinates are co-prime. Now the lemma immediately follows
from the facts that the total number of lattice points inside a ball
of radius $r$ is $cr^{2}$, and the ratio of the number of points
of $V$ within that ball is approaching a positive constant as $r$
tends to infinity (\cite{apostol1976introduction}, Thm. 3.9).

\end{proof}
Fix $\eps>0$ and choose a decreasing sequence of positive numbers
$\left\{ \delta\left(n\right)\right\} $ such that 
\begin{enumerate}
\item $\sum_{n}\delta\left(n\right)<\left(\nicefrac{\eps}{2}\right)^{\nicefrac{1}{2}}$
\item $\delta\left(n\right)n^{\nicefrac{1}{\alpha}}\rightarrow\infty$ as
$n\rightarrow\infty$ for all $\alpha<1$.
\end{enumerate}

Let $w\in\mathbb{Z}^{2}\backslash\left\{ 0\right\} $ and find the
unique $\ell,m\in\mathbb{N}$ such that 
\[
w=\ell\cdot v_{m},
\]
we set 
\[
\rho_{w}=\delta\left(\ell\right)\delta\left(m\right).
\]

Next, take any $\xi\in\bbZ^{2}$ linearly independent of $w$. We
define an invariable linear transformation $L_{w}$ by 
\[
L_{w}e_{1}=w\,,\,L_{w}e_{2}=\xi
\]
where $e_{1},e_{2}\in\bbZ^{2}$ are the standard basis vectors. Consider
the sets 
\[
I_{w}=\left(-\rho_{w},\rho_{w}\right)\times\bbT\quad,\quad\tilde{I}_{w}=\bigcup_{u\in\bbZ^{2}}\left(I_{w}+2\pi u\right)
\]
and 
\[
I_{\left[w\right]}=\left(\left(L_{w}^{T}\right)^{-1}\left(\tilde{I}_{w}\right)\right)\cap\bbT^{2}.
\]
An important observation is that $I_{\left[w\right]}$ is independent
of the choice of $\xi$. We also define $I_{w}^{\prime}=\bbT\times\left(-\rho_{w},\rho_{w}\right)$
and $\tilde{I_{w}^{\prime}}$ respectively.

Since $\bbZ^{2}$ is a subgroup of $\left(L_{w}^{T}\right)^{-1}\left(\bbZ^{2}\right)$
(of index $\det L_{w}$), $\bbT^{2}$ is a union of $\det L_{w}$
disjoint copies of $\left(L_{w}^{T}\right)^{-1}\left(\bbT^{2}\right)$,
from which we can deduce that $I_{\left[w\right]}$ is a union of
$\det L_{w}$ disjoint translated copies of $\left(L_{w}^{T}\right)^{-1}\left(I_{w}\right)$,
therefore 
\[
\left|I_{\left[w\right]}\right|=\det L_{w}\left|\left(L_{w}^{T}\right)^{-1}\left(I_{w}\right)\right|=\left|I_{w}\right|=2\rho_{w}.
\]
We are now ready to define 
\[
{\displaystyle \mathcal{S}=\bbT^{2}\backslash\bigcup_{w\in\bbZ^{2}\backslash\left\{ 0\right\} }I_{\left[w\right]}}=\left(\bigcup_{w\in\bbZ^{2}\backslash\left\{ 0\right\} }I_{\left[w\right]}\right)^{c}
\]
whence 
\[
\left|\mathcal{S}\right|\geq1-\sum_{w\in\bbZ^{2}\backslash\left\{ 0\right\} }\left|I_{\left[w\right]}\right|=1-2\sum_{w\in\bbZ^{2}\backslash\left\{ 0\right\} }\rho\left(w\right)=1-2\sum_{\ell}\delta\left(\ell\right)\sum_{m}\delta\left(m\right)>1-\eps.
\]

\subsection{Proof for rank 1 GAP}

Let $w\in\mathbb{Z}^{2}$, $N\in\mathbb{N}$, and consider the trigonometric
polynomial with spectrum $AP\left(w;N^{2}\right)$
\[
P\left(x,y\right)=\sum_{j=1}^{N^{2}}c\left(j\right)e^{i\left\langle jw,\left(x,y\right)\right\rangle }.
\]
Since $\mathcal{S}\subseteq I_{\left[w\right]}^{c}$

\[
\underset{\mathcal{S}}{\int}\left|P\left(x,y\right)\right|^{2}\frac{dxdy}{\left(2\pi\right)^{2}}\leq\underset{I_{\left[w\right]}^{c}}{\int}\left|P\left(x,y\right)\right|^{2}\frac{dxdy}{\left(2\pi\right)^{2}}=
\]
taking $\left(\tilde{x},\tilde{y}\right)^{T}=L_{w}^{T}\left(x,y\right)^{T}$
we get 
\[
=\underset{\mathbb{T}^{2}\backslash I_{w}}{\int}\left|Q\left(\tilde{x},\tilde{y}\right)\right|^{2}\frac{d\tilde{x}d\tilde{y}}{\left(2\pi\right)^{2}}
\]
and ${\displaystyle Q\left(x,y\right)=\sum_{j=1}^{N^{2}}c\left(j\right)e^{ijx}}$
is the polynomial with spectrum $AP\left(e_{1};N^{2}\right)$. Set
$c\left(j\right)=\frac{1}{N}$ and note that for every $\left(x,y\right)\in\bbT^{2}$
we have $\left|Q\left(x,y\right)\right|\leq\frac{1}{N\sin\frac{x}{2}}$,
hence 
\[
\underset{\mathcal{S}}{\int}\left|P\left(x,y\right)\right|^{2}\frac{dxdy}{\left(2\pi\right)^{2}}\leq\frac{2}{N^{2}}\int_{\rho_{w}}^{\pi}\left|\sum_{j=1}^{N^{2}}e^{ij\tilde{x}}\right|^{2}\frac{d\tilde{x}}{2\pi}\leq\frac{C}{N^{2}}\int_{\rho_{w}}^{\pi}\frac{d\tilde{x}}{\sin^{2}\nicefrac{\tilde{x}}{2}}<\frac{C}{N^{2}}\int_{\rho_{w}}^{\pi}\frac{d\tilde{x}}{\tilde{x}^{2}}\leq\frac{C}{N^{2}\rho_{w}}
\]

Fix $\alpha<1$ and let $\Lambda\subset\bbZ^{2}$ be such that for
arbitrarily large $N$ one can find $w,\,M\in\bbZ^{2}$ (both depend
on $N$) for which $M+AP\left(w;N^{2}\right)\subset\Lambda$ and 
\[
\left|w\right|=\ell\left|v_{m}\right|\leq CN^{\alpha}.
\]

Applying Lemma 3 we get 
\[
\underset{\mathcal{S}}{\int}\left|P\left(x,y\right)\right|^{2}\frac{dxdy}{\left(2\pi\right)^{2}}\leq\frac{C}{N^{2}\rho_{w}}=\frac{C}{N^{2}\delta\left(\ell\right)\delta\left(m\right)}\leq
\]
\[
\leq\frac{C\left(\alpha\right)}{\ell^{\nicefrac{2}{\alpha}}\left|v_{m}\right|^{\nicefrac{2}{\alpha}}\delta\left(\ell\right)\delta\left(m\right)}\leq\frac{C\left(\alpha\right)}{\ell^{\nicefrac{2}{\alpha}}m^{\nicefrac{1}{\alpha}}\delta\left(\ell\right)\delta\left(m\right)}
\]
now by the choice of the sequence $\left\{ \delta\left(n\right)\right\} $
the last expression can be made arbitrarily small if either $\ell$
or $m$ are unbounded. The latter must be the case unless the step
$w$ remains bounded, but in this case the result still holds, since
$N$ can be made arbitrarily large.

For the moreover part, assume now that for some fixed $v\in V$ and
for arbitrarily large $N\in\bbN$\emph{ }we have $M+AP\left(\ell\cdot v,N\right)\subset\Lambda$,
where $M=M\left(\ell\right)\in\bbZ^{2}$ and $\ell\leq C\left(v\right)N^{\alpha}$.
In this case setting $w=\ell v$ and taking the coefficients $c\left(j\right)=\frac{1}{\sqrt{N}}$
gives
\[
\underset{\mathcal{S}}{\int}\left|P\left(x,y\right)\right|^{2}\frac{dxdy}{\left(2\pi\right)^{2}}\leq\frac{C}{N\rho_{w}}\leq\frac{C\left(\alpha,v\right)}{\ell^{\nicefrac{1}{\alpha}}\delta\left(\ell\right)}
\]
which can also be made arbitrarily small.

\subsection{Proof for rank 2 GAP}

Let $w_{1},w_{2}\in\mathbb{Z}^{2}$ linearly independent, and integers
$d_{1},d_{2}>1$. Consider $GAP\left(w_{1},w_{2};d_{1},d_{2}\right)$
and the trigonometric polynomial 
\[
{\displaystyle P\left(x,y\right)=\sum_{j_{1}=1}^{d_{1}}\sum_{j_{2}=1}^{d_{2}}c\left(j_{1},j_{2}\right)e^{i\left\langle j_{1}w_{1}+j_{2}w_{2},\left(x,y\right)\right\rangle }}.
\]

Define the invartiable linear transformation $L=L\left(w_{1},w_{2}\right)$
\[
Le_{j}=w_{j}\;,\;j=1,2.
\]
From the fact that $I_{\left[w\right]}$ depends solely on $w$, it
follows that 
\[
\left(L^{T}\right)^{-1}\left(\tilde{I}_{w_{1}}\cup\tilde{I^{\prime}}_{w_{2}}\right)\cap\bbT^{2}=I_{\left[w_{1}\right]}\cup I_{\left[w_{2}\right]},
\]
 which implies $\mathcal{S}\subseteq I_{\left[w_{1}\right]}^{c}\cap I_{\left[w_{2}\right]}^{c}$.\\
We estimate 
\[
\underset{\mathcal{S}}{\int}\left|P\left(x,y\right)\right|^{2}\frac{dxdy}{\left(2\pi\right)^{2}}\leq\underset{\left(L^{T}\right)^{-1}\left(\bbT^{2}\backslash\left(\tilde{I}_{w_{1}}\cup\tilde{I^{\prime}}_{w_{2}}\right)\right)}{\int}\left|P\left(x,y\right)\right|^{2}\frac{dxdy}{\left(2\pi\right)^{2}}=
\]
and after an appropriate change of variables we get 

\[
=\underset{\mathbb{T}^{2}\backslash\left(I_{w_{1}}\cup I_{w_{2}}^{\prime}\right)}{\int}\left|Q\left(\tilde{x},\tilde{y}\right)\right|^{2}\frac{d\tilde{x}d\tilde{y}}{\left(2\pi\right)^{2}}
\]
where 
\[
{\displaystyle Q\left(x,y\right)=\sum_{j_{1}=1}^{d_{1}}\sum_{j_{2}=1}^{d_{2}}c\left(j_{1},j_{2}\right)e^{i\left\langle \left(j_{1},j_{2}\right),\left(x,y\right)\right\rangle }}.
\]
Setting $N^{2}=d_{1}\cdot d_{2}$ and $c\left(j_{1},j_{2}\right)=\frac{1}{N}$
we get 
\[
\underset{\mathcal{S}}{\int}\left|P\left(x,y\right)\right|^{2}\frac{dxdy}{\left(2\pi\right)^{2}}\leq\frac{c}{N^{2}}\left(\int_{\rho_{w_{1}}}^{\pi}\left|\sum_{j_{1}=1}^{d_{1}}e^{ij\tilde{x}}\right|^{2}\frac{d\tilde{x}}{2\pi}\right)\left(\int_{\rho_{w_{2}}}^{\pi}\left|\sum_{j_{1}=1}^{d_{1}}e^{ij\tilde{y}}\right|^{2}\frac{d\tilde{y}}{2\pi}\right)<\frac{c}{N^{2}\rho_{w_{1}}\rho_{w_{2}}}\leq
\]
plugging our assumption $\left|w_{j}\right|=\ell_{j}\left|v_{m_{j}}\right|\leq CN^{\alpha}$
and the estimate from Lemma 3 
\[
\leq c\left(\alpha\right)\frac{1}{\delta\left(\ell_{1}\right)\delta\left(\ell_{2}\right)\left(\ell_{1}\ell_{2}\right)^{\nicefrac{2}{\alpha}}}\frac{1}{\delta\left(m_{1}\right)\delta\left(m_{2}\right)\left(m_{1}m_{2}\right)^{\nicefrac{1}{\alpha}}}
\]
The rest of the proof as well as the moreover part is similar to the
one in section 2.2 and hence omitted.

\section{Proof of Theorem 2}

Let $\mathcal{P}$ be the set of all prime numbers. Consider the vector
$w_{n,k}=\left(n,k\right)\in\bbZ^{2}$ and the collection of rank
1 GAPs
\[
B\left(n,k\right)=AP\left(w_{n,k};\,n^{2}\right)=\left\{ w_{n,k},\,2w_{n,k},\ldots,\,n^{2}w_{n,k}\right\} .
\]

\begin{lem}

The collection $\left\{ B\left(p,k\right)\right\} $ with $p\in\mathcal{P}$
and $k\in\left\{ 1,\ldots,p-1\right\} $ is pairwise disjoint.

\end{lem}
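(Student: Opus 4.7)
The plan is a direct elementwise comparison, using only unique factorization and the constraint $1 \le k \le p-1$. An element of $B(p,k)$ has the form $j(p,k) = (jp, jk)$ with $1 \le j \le p^{2}$. So if $B(p,k)$ and $B(p',k')$ share a point, there exist indices $j, j'$ with $jp = j'p'$ and $jk = j'k'$.

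First I would handle the trivial case $p = p'$: then $j = j'$ immediately and the second equality forces $k = k'$. The remaining case is $p \ne p'$, where $p$ and $p'$ are distinct primes and therefore coprime. From $jp = j'p'$ and $p \mid j'p'$ one gets $p \mid j'$, so we may write $j' = pa$ for some positive integer $a$, which then yields $j = ap'$. Substituting into $jk = j'k'$ gives $ap'k = apk'$, i.e.\ $p'k = pk'$. Since $\gcd(p,p') = 1$, we conclude $p \mid k$, contradicting $1 \le k \le p-1$.

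There is no real obstacle here; the whole argument is a two-line divisibility check. The only thing to be careful about is keeping the two roles of the prime $p$ straight when canceling in $p'k = pk'$, and noting that the range restriction on $k$ (strictly less than $p$) is precisely what makes distinct primes produce disjoint progressions. Hence the collection $\{B(p,k)\}_{p \in \mathcal{P},\, 1 \le k \le p-1}$ is pairwise disjoint.
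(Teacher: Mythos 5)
Your proof is correct and is essentially the paper's argument made explicit: the identity $p'k=pk'$ you derive is exactly the statement that the lines $y=\tfrac{k}{p}x$ and $y=\tfrac{k'}{p'}x$ have equal slopes, which the paper rules out in one line by noting all these slopes are distinct (since $\gcd(k,p)=1$). The divisibility details you supply are a sound, if more verbose, way of justifying that one-liner.
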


\begin{proof}

This simply follows from the fact that the lines $y=\frac{k}{p}x$,
$p\in\mathcal{P}$, $k\in\left\{ 1,\ldots,p-1\right\} $ all have
different slopes.

\end{proof}
\begin{lem}

Let $\left\{ a\left(m,m^{\prime}\right)\right\} _{m,m^{\prime}\in\bbZ}$
a sequence of non-negative numbers satisfying

\begin{enumerate}
\item ${\displaystyle \sum_{m,m^{\prime}\in\bbZ}a\left(m,m^{\prime}\right)\leq1}$.
\item $a\left(m,m^{\prime}\right)=a\left(-m,-m^{\prime}\right)$ for all
$m,m^{\prime}\in\bbZ$.
\end{enumerate}
Then for every $\eps>0$, there exist infinitely many $n\in\bbN$ so
that for each of which one can find $1\leq k\leq n-1$ s.t. 
\[
\sum_{\lambda\in B\left(n,k\right)}a\left(\lambda\right)<\frac{\eps}{n^{2}}\,.
\]

\end{lem}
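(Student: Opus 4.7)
The plan is to argue by contradiction, combining the disjointness provided by Lemma 4 with the divergence of $\sum_{p\in\mathcal{P}} 1/p$.

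Assume the conclusion fails. Then there exist $\eps>0$ and $N_{0}\in\bbN$ such that for every $n\geq N_{0}$ and every $k\in\{1,\ldots,n-1\}$ one has ${\displaystyle \sum_{\lambda\in B(n,k)}a(\lambda)\geq \eps/n^{2}}$. Restrict attention to the infinite subfamily of primes $p\geq N_{0}$. By Lemma 4 the doubly indexed family $\{B(p,k)\}_{p\in\mathcal{P},\,1\leq k\leq p-1}$ is pairwise disjoint, so summing the non-negative quantities $a(\lambda)$ against this family,
\[
\sum_{\substack{p\in\mathcal{P}\\ p\geq N_{0}}}\ \sum_{k=1}^{p-1}\ \sum_{\lambda\in B(p,k)}a(\lambda)\ \leq\ \sum_{m,m'\in\bbZ}a(m,m')\ \leq\ 1.
\]
On the other hand, the assumed lower bound yields
\[
\sum_{\substack{p\in\mathcal{P}\\ p\geq N_{0}}}\ \sum_{k=1}^{p-1}\ \sum_{\lambda\in B(p,k)}a(\lambda)\ \geq\ \sum_{\substack{p\in\mathcal{P}\\ p\geq N_{0}}}(p-1)\,\frac{\eps}{p^{2}}\ \geq\ \frac{\eps}{2}\sum_{\substack{p\in\mathcal{P}\\ p\geq N_{0}}}\frac{1}{p},
\]
and the right-hand side is infinite by Euler's theorem, a contradiction. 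Consequently infinitely many primes $p$ (and hence infinitely many $n\in\bbN$) admit some $k\in\{1,\ldots,p-1\}$ with ${\displaystyle \sum_{\lambda\in B(p,k)}a(\lambda)<\eps/p^{2}}$.

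The main obstacle, and the reason naive pigeonhole within a single $n$ is insufficient, is that one needs to beat the very small threshold $\eps/n^{2}$, whereas the $n-1$ available disjoint translates of $B(n,\cdot)$ only furnish a per-set saving of order $1/n$ from the unit total mass bound. The remedy is precisely the \emph{global} cross-prime disjointness supplied by Lemma 4, which pools the savings along the whole sequence of primes; the divergence of $\sum 1/p$ then forces the pooled lower bound to exceed the upper bound of $1$. Note that the symmetry hypothesis $a(m,m')=a(-m,-m')$ is not used in this lemma; it must be designated for the subsequent application of Lemma 5 in the construction for Theorem 2.
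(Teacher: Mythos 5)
Your proof is correct and follows essentially the same route as the paper: negate the conclusion, use the pairwise disjointness from Lemma 4 to bound the pooled sum over all $B(p,k)$ by the total mass $1$, and derive a contradiction from the divergence of $\sum_{p\in\mathcal{P}}1/p$. Your remark that the symmetry hypothesis is not needed here (it is only used in the subsequent corollary) is also accurate.
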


\begin{proof}

By Lemma 4 we may write 
\[
\sum_{m,m^{\prime}\in\bbZ}a\left(m,m^{\prime}\right)\geq\sum_{p\in\mathcal{P}}\sum_{k=1}^{p-1}\sum_{\lambda\in B\left(p,k\right)}a\left(\lambda\right)\,.
\]
Assuming the contrary for some $\eps>0$, i.e., for all but finitely
many $p\in P$ we have ${\displaystyle \sum_{\lambda\in B\left(p,k\right)}a\left(\lambda\right)\geq\frac{\eps}{p^{2}}}$
for all $1\leq k\leq p-1$. It follows that 
\[
\sum_{k=1}^{p-1}\sum_{\lambda\in B\left(p,k\right)}a\left(\lambda\right)\geq\frac{\eps\left(p-1\right)}{p^{2}}>\frac{\eps}{3p}
\]
 which contradicts the fact that $\underset{p\in\mathcal{P}}{\sum}\frac{1}{p}=\infty$.

\end{proof}
\begin{cor}

For every $\eps>0$, there are infinitely many $n\in\bbN$ satisfying
\[
\underset{\underset{\mu\neq\lambda}{\lambda,\mu\in B\left(n,k\right)}}{\sum}a\left(\lambda-\mu\right)<\eps\,.
\]

\end{cor}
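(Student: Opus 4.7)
The plan is to reduce the double sum over pairs $(\lambda,\mu)$ to the single sum already controlled by Lemma 5, using the symmetry of the sequence $a$.

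First I would parametrize: every $\lambda\in B(n,k)$ has the form $\lambda=j\,w_{n,k}$ with $1\leq j\leq n^{2}$, so for $\lambda,\mu\in B(n,k)$ with $\lambda\neq\mu$ the difference $\lambda-\mu$ equals $d\,w_{n,k}$ for some $d\in\{-(n^{2}-1),\ldots,-1,1,\ldots,n^{2}-1\}$. For a fixed such $d$, the number of ordered pairs $(i,j)$ with $i-j=d$ is exactly $n^{2}-|d|\leq n^{2}$. Grouping pairs by their difference gives
\[
\sum_{\substack{\lambda,\mu\in B(n,k)\\ \lambda\neq\mu}}a(\lambda-\mu)\;\leq\;n^{2}\sum_{0<|d|\leq n^{2}-1}a(d\,w_{n,k}).
\]

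Next I would invoke the symmetry hypothesis $a(m,m')=a(-m,-m')$, which yields $a(d\,w_{n,k})=a(-d\,w_{n,k})$, and fold the sum over $d$ onto positive indices. Since the positive part of this sum is itself majorized by $\sum_{\lambda\in B(n,k)}a(\lambda)$, one obtains
\[
\sum_{\substack{\lambda,\mu\in B(n,k)\\ \lambda\neq\mu}}a(\lambda-\mu)\;\leq\;2n^{2}\sum_{\lambda\in B(n,k)}a(\lambda).
\]

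To finish, I would apply Lemma 5 with $\varepsilon/2$ in place of $\varepsilon$: this produces infinitely many $n\in\mathbb{N}$, together with an index $1\leq k\leq n-1$ for each, for which the inner sum is strictly less than $\varepsilon/(2n^{2})$. Multiplying by $2n^{2}$ delivers the desired estimate. There is no real obstacle here; the argument is essentially bookkeeping, and the only points requiring care are the multiplicity $n^{2}-|d|$ in the difference set $B(n,k)-B(n,k)$ and the use of symmetry to collapse the two-sided sum over $d$ onto the one-sided sum controlled by Lemma 5.
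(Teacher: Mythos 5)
Your proposal is correct and follows essentially the same route as the paper: both arguments count the multiplicity $n^{2}-|d|$ of each difference $d\,w_{n,k}$, use the symmetry $a(m,m')=a(-m,-m')$ to reduce to a one-sided sum bounded by $2n^{2}\sum_{\lambda\in B(n,k)}a(\lambda)$, and then invoke Lemma 5 with $\eps/2$. The only difference is cosmetic bookkeeping (you fold the two-sided sum over $d$ at the end, while the paper restricts to $j_{1}<j_{2}$ at the start and doubles).
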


\begin{proof}

Fix $\eps>0$ and let $n$ and $k=k\left(n\right)$ be as follows
from Lemma 5 with $\nicefrac{\eps}{2}$. For $\lambda,\mu\in B\left(n,k\right)$
we write
\[
\lambda=j_{1}w_{n,k},\quad \mu=j_{2}w_{n,k},\qquad 1\leq j_{1}<j_{2}\leq n^{2}.
\]

Symmetry of $\left\{ a\left(m,m^{\prime}\right)\right\} $ implies
\[
\underset{\underset{\mu\neq\lambda}{\lambda,\mu\in B\left(k,n\right)}}{\sum}a\left(\lambda-\mu\right)=2\underset{j_{1}=1}{\sum^{n^{2}-1}}\underset{j_{2}=j_{1}+1}{\sum^{n^{2}}}a\left(\left(j_{2}-j_{1}\right)w_{n,k}\right)=
\]
\[
=2\underset{j=1}{\sum^{n^{2}-1}}\left(n^{2}-j\right)a\left(jw_{n,k}\right)\leq2n^{2}\underset{j=1}{\sum^{n^{2}}}a\left(jw_{n,k}\right)<\eps.
\]

\end{proof}
\begin{lem}

Given $\mathcal{S}\subset\bbT^{2}$ of positive measure, there exist
a constant $\gamma=\gamma\left(\mathcal{S}\right)>0$ for which the
following holds: For infinitely many $n\in\bbN$ there exists $1\leq k\leq n-1$
s.t. $\gamma$ is a lower Riesz bound \emph{(}in $L^{2}\left(\mathcal{S}\right)$\emph{)}
for $E\left(B\left(n.k\right)\right)$.

\end{lem}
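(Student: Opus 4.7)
The plan is to realize the lower Riesz bound as the smallest eigenvalue of the Gram matrix of $E(B(n,k))$ in $L^2(\mathcal{S})$ and then use Corollary 6 to absorb its off-diagonal part. Concretely, for $c=(c_j)_{j=1}^{n^2}$ a direct expansion gives
\begin{equation*}
\int_{\mathcal{S}} \Bigl|\sum_{j=1}^{n^2} c_j e^{i\langle jw_{n,k},t\rangle}\Bigr|^{2}\frac{dt}{(2\pi)^2} \;=\; |\mathcal{S}|\,\|c\|_{2}^{2} \;+\; \sum_{j_1\neq j_2} c_{j_1}\overline{c_{j_2}}\,\hat{\chi}_{\mathcal{S}}\bigl((j_2-j_1)w_{n,k}\bigr),
\end{equation*}
where $\hat{\chi}_{\mathcal{S}}$ denotes the Fourier coefficients of the indicator of $\mathcal{S}$ normalized so that $\hat{\chi}_{\mathcal{S}}(0)=|\mathcal{S}|$. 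The first summand is the desired ``diagonal'' contribution; the task reduces to showing that, for infinitely many $n$ and a suitable $k=k(n)$, the off-diagonal bilinear form is small compared to $|\mathcal{S}|\,\|c\|_{2}^{2}$ uniformly in $c$.

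The key step is to apply Corollary 6 with the choice $a(m,m'):=|\hat{\chi}_{\mathcal{S}}(m,m')|^2$. This sequence is non-negative; it satisfies the required symmetry because $\chi_{\mathcal{S}}$ is real (so $\hat{\chi}_{\mathcal{S}}(-m)=\overline{\hat{\chi}_{\mathcal{S}}(m)}$); and by Parseval $\sum_{m,m'} a(m,m')=|\mathcal{S}|\leq 1$. For any preassigned $\eps>0$ the corollary then supplies infinitely many $n\in\bbN$, each with an index $1\leq k\leq n-1$, such that
\begin{equation*}
\sum_{\substack{\lambda,\mu\in B(n,k)\\ \lambda\neq\mu}} |\hat{\chi}_{\mathcal{S}}(\lambda-\mu)|^{2} \;<\; \eps.
\end{equation*}

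Finally I would close the argument with Cauchy--Schwarz (equivalently, with the Hilbert--Schmidt bound on the operator norm of the off-diagonal block):
\begin{equation*}
\Bigl|\sum_{j_1\neq j_2} c_{j_1}\overline{c_{j_2}}\,\hat{\chi}_{\mathcal{S}}\bigl((j_2-j_1)w_{n,k}\bigr)\Bigr| \;\leq\; \Bigl(\sum_{j_1\neq j_2} |\hat{\chi}_{\mathcal{S}}((j_2-j_1)w_{n,k})|^{2}\Bigr)^{1/2}\|c\|_{2}^{2} \;<\; \sqrt{\eps}\,\|c\|_{2}^{2}.
\end{equation*}
Setting $\eps:=|\mathcal{S}|^{2}/4$ once and for all then yields the uniform lower Riesz bound $\gamma:=|\mathcal{S}|/2$ along the infinite subsequence of $n$'s produced above, and this $\gamma$ depends only on $\mathcal{S}$ as required. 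The one conceptual obstacle---and the only real subtlety---is choosing $a=|\hat{\chi}_{\mathcal{S}}|^{2}$ rather than $|\hat{\chi}_{\mathcal{S}}|$: Parseval guarantees $\ell^{1}$-summability of $a$ (and hence the hypotheses of Corollary 6) only when the coefficients are squared, while Cauchy--Schwarz precisely converts this $\ell^{2}$-smallness into the operator-norm bound needed to dominate the off-diagonal perturbation.
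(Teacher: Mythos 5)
Your proof is correct and is precisely the argument the paper leaves implicit: it reduces the lower Riesz bound to controlling the off-diagonal part of the Gram matrix via Corollary 6 applied to $a=|\hat{\chi}_{\mathcal{S}}|^{2}$ (the same route as Lemma 7 of the cited one-dimensional paper). Your observation that one must square the Fourier coefficients to get $\ell^{1}$-summability from Parseval, and then recover the operator-norm bound by Cauchy--Schwarz, is exactly the right point of care.
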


\begin{proof}

The proof follows from Corollary 6 (see Lemma 7 in \cite{londner2014riesz}).

\end{proof}
\begin{lem}

Let $\gamma>0$, $\mathcal{S}\subset\bbT^{2}$ with $\left|\mathcal{S}\right|>0$,
and $B_{1},B_{2}\subset\bbN^{2}$ finite subsets s.t $\gamma$ is
a lower Riesz bound \emph{(}in $L^{2}\left(\mathcal{S}\right)$\emph{)}
for $E\left(B_{j}\right)$, $j=1,2$. Then for any $0<\gamma'<\gamma$
there exists $M\in\bbZ^{2}$ s.t. the system $E\left(B_{1}\cup\left(M+B_{2}\right)\right)$
has $\gamma'$ as a lower Riesz bound.

\end{lem}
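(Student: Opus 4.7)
The plan is a standard ``translate--far--away'' argument: push the spectrum $B_2$ so far from $B_1$ that the cross--interactions between the two blocks become negligible, which is possible because the Fourier coefficients of $\mathbf{1}_{\SS}$ decay at infinity by the Riemann--Lebesgue lemma. Given any finite sequence $\{c_\lambda\}_{\lambda\in B_1\cup(M+B_2)}$, I would decompose the associated exponential sum as
\[
P(t)=P_1(t)+e^{i\langle M,t\rangle}P_2(t),\qquad P_1(t)=\sum_{\lambda\in B_1}c_\lambda e^{i\langle\lambda,t\rangle},\qquad P_2(t)=\sum_{\mu\in B_2}c_{M+\mu}e^{i\langle\mu,t\rangle},
\]
using the identity $\sum_{\mu\in B_2}c_{M+\mu}e^{i\langle M+\mu,t\rangle}=e^{i\langle M,t\rangle}P_2(t)$. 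Expanding the squared norm gives
\[
\ltnorm{P}{(\SS)}^2=\ltnorm{P_1}{(\SS)}^2+\ltnorm{P_2}{(\SS)}^2+2\Re\int_{\SS}\overline{P_1(t)}\,e^{i\langle M,t\rangle}P_2(t)\,\frac{dt}{(2\pi)^2}.
\]

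Since $|e^{i\langle M,t\rangle}|\equiv 1$, multiplication by $e^{i\langle M,\cdot\rangle}$ is an $L^2(\SS)$--isometry, so $E(M+B_2)$ has the same lower Riesz bound $\gamma$ as $E(B_2)$. Applying the hypothesis to $E(B_1)$ and $E(M+B_2)$ separately yields
\[
\ltnorm{P_1}{(\SS)}^2+\ltnorm{P_2}{(\SS)}^2\geq\gamma\sum_{\lambda\in B_1\cup(M+B_2)}|c_\lambda|^2,
\]
so the task reduces to controlling the cross term uniformly in $\{c_\lambda\}$. A direct expansion identifies it as
\[
\sum_{\lambda\in B_1,\,\mu\in B_2}\overline{c_\lambda}\,c_{M+\mu}\,\widehat{\mathbf{1}_{\SS}}(\lambda-\mu-M).
\]
The difference set $D:=B_1-B_2$ is finite; Riemann--Lebesgue applied to $\mathbf{1}_\SS\in L^1(\bbT^2)$ then implies
\[
\eta(M):=\max_{\nu\in D}\bigl|\widehat{\mathbf{1}_{\SS}}(\nu-M)\bigr|\longrightarrow 0\qquad\text{as }|M|\to\infty.
\]
A routine Cauchy--Schwarz on the finite double sum delivers the uniform bound
\[
\Bigl|\,\text{cross term}\,\Bigr|\leq\eta(M)\sqrt{|B_1|\cdot|B_2|}\sum_{\lambda}|c_\lambda|^2.
\]

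To finish, I would choose $M\in\bbZ^2$ with $|M|$ large enough that $2\eta(M)\sqrt{|B_1|\cdot|B_2|}<\gamma-\gamma'$, which is possible by the decay just noted. Combining the estimates gives $\ltnorm{P}{(\SS)}^2\geq\gamma'\sum_\lambda|c_\lambda|^2$, proving that $\gamma'$ is a lower Riesz bound for $E(B_1\cup(M+B_2))$. I do not foresee any substantial obstacle: the key conceptual points are the translation invariance of the lower Riesz bound (immediate from unimodularity of $e^{i\langle M,\cdot\rangle}$) and the crucial fact that $|B_1|,|B_2|$ are fixed once $B_1,B_2$ are fixed, so a single choice of $M$ works \emph{uniformly} over all coefficient sequences.
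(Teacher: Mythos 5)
Your proof is correct and is essentially the argument behind the paper's own proof, which simply defers to Lemma 8 of \cite{londner2014riesz}: translate $B_{2}$ far enough that the Riemann--Lebesgue decay of $\widehat{\mathbf{1}_{\SS}}$ makes the cross term negligible, uniformly in the coefficients because $B_{1},B_{2}$ are fixed finite sets. The only point worth making explicit is that $M$ should also be chosen so that $B_{1}\cap\left(M+B_{2}\right)=\varnothing$ (automatic for $\left|M\right|$ large, since both sets are finite), so that the decomposition $P=P_{1}+e^{i\left\langle M,t\right\rangle }P_{2}$ and the identity $\sum_{\lambda\in B_{1}}\left|c_{\lambda}\right|^{2}+\sum_{\mu\in B_{2}}\left|c_{M+\mu}\right|^{2}=\sum_{\lambda\in B_{1}\cup\left(M+B_{2}\right)}\left|c_{\lambda}\right|^{2}$ are legitimate.
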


\begin{proof}

See \cite{londner2014riesz}, Lemma 8.

\end{proof}

Now we are ready to finish the proof of Theorem 2.

Given $\mathcal{S}$ take $\gamma$ from Lemma 7 and denote by $\mathcal{N}$
the set of all pairs of natural numbers $\left(n,k\right)$ for which
$\gamma$ is a lower Riesz bound (in $L^{2}\left(\mathcal{S}\right)$)
for $E\left(B\left(n,k\right)\right)$. Define 
\[
\Lambda=\underset{\left(n,k\right)\in\mathcal{N}}{\bigcup}\left(M_{n,k}+B\left(n,k\right)\right)\,.
\]
Due to Lemma 8 we can choose, subsequently for every $\left(n,k\right)\in\mathcal{N}$,
a vector $M_{n,k}\in\bbZ^{2}$ s.t. $E\left(\Lambda\right)$ has lower
Riesz bound at least $\frac{\gamma}{2}$.

\section{Open problems}

Going over the proof one can easily see that both results extend to
the $d$ dimensional torus for any $d\in\bbN$. Yet a couple of questions
still remain unanswered.

\textbf{Problem 1.} What can be said about sets which contains arbitrarily
long GAP of size $N^{2}$ and step(s) length(s) at most $o\left(N\right)$. Does every set $\SS$ admit a Riesz sequence with such
property? or is there a set which rejects them all?

Notice that this is not known even in the one dimensional case.

~

The explicit construction in the proof of Theorem 2 yields a set $\Lambda$
which contains arbitrarily long rank 1 arithmetic progressions of
size $N^{2}$ and step length $N$.

\textbf{Problem 2.} Can one find an explicit construction as in the
proof of Theorem 2 with full rank GAPs?

\end{onehalfspace}

\begin{thebibliography}{HD}
	
	\normalsize
	\baselineskip=17pt
	
	\bibitem[1]{apostol1976introduction} T.M. Apostol,
	\emph{Introduction to analytic number theory},
	Springer, Berlin, 1976.
	
	\bibitem[2]{MR2270922}  M. Bownik and D. Speegle,
	\emph{The Feichtinger conjecture for wavelet frames, Gabor frames and frames of translates},  
	Canad. J. Math. {58} (2006), no.~6, 1121--1143.
	
	\bibitem[3]{MR2277219}  P.G. Casazza, M. Fickus, J.C. Tremain and E. Weber,
	\emph{The Kadison-Singer problem in mathematics and engineering: a detailed account},  
	In: Operator theory, operator algebras, and applications, volume 414 of Contemp. Math., 299--355.
	Amer. Math. Soc., Providence, RI, 2006.
	
	\bibitem[4]{gowers2001new}  W.T. Gowers,
	\emph{A new proof of Szemer{\'e}di's theorem},  
	Geometric and Functional Analysis {3} (2001), no.~6, 465--588.
	
	\bibitem[5]{lawton2010minimal} W. Lawton,
	\emph{Minimal sequences and the Kadison-Singer problem},  
	Bulletin of the Malaysian Mathematical Sciences Society, http://math. usm. my/bulletin,(2) {33} (2010), no.~2, 169--176.
	
	\bibitem[6]{londner2014riesz} I. Londner and A. Olevski{\u\i},
	\emph{Riesz sequences and arithmetic progressions},  
	Studia Mathematica {225} (2014), no.~2, 183--191.
	
	\bibitem[7]{marcus2015interlacing} A. Marcus, D.A. Spielman and N. Srivastava,
	\emph{Interlacing Families II: Mixed Characteristic Polynomials and the Kadison-Singer Problem},
	Annals of Mathematics {182} (2013), 327--350.
	
	\bibitem[8]{MR1477155} M.B. Nathanson,
	\emph{Additive number theory: Inverse problems and the geometry of sumsets},  
	Springer-Verlag, New York, 1996.
	
	\bibitem[9]{olevskii2016functions} A. Olevski{\u\i} and A. Ulanovskii,
	\emph{Functions with Disconnected Spectrum},
	American Mathematical Soc., 65, 2016.
		
	\bibitem[10]{young1980introduction} R.M. Young,
	\emph{An Introduction to Non-Harmonic Fourier Series},  
	Academic Press Inc., 1980.
	

\end{thebibliography}
\end{document}